\newcommand{\calA}{\mathcal{A}}
\newcommand{\calG}{\mathcal{G}}
\newcommand{\calH}{\mathcal{H}}
\newcommand{\calK}{\mathcal{K}}
\newcommand{\calS}{\mathcal{S}}
\newcommand{\calW}{\mathcal{W}}
\newcommand{\reals}{{\mathbb R}}
\newcommand{\naturals}{{\mathbb N}}
\newcommand{\bx}{{\bf x}}
\newcommand{\by}{{\bf y}}
\newcommand{\bk}{{\bf k}} 
\newcommand{\e}{{\varepsilon}}
\newcommand{\setu}{{\mathfrak{u}}}
\newcommand{\setv}{{\mathfrak{v}}}
\newcommand{\rd}{{\rm d}}
\newcommand{\il}{\left<}
\newcommand{\ir}{\right>}
\newcommand{\newton}[2]{\left(\begin{array}{c} #1\\ #2\end{array}\right)}
\theoremstyle{plain}
\newtheorem{theorem}{Theorem}
\newtheorem{proposition}{Proposition}
\theoremstyle{definition}
\newtheorem{remark}{Remark}
\title{On Tractability of Approximation \\
       for a Special Space of Functions}
\author{M. Hegland and G. W. Wasilkowski}
\date{\today}
\begin{document}

\begin{abstract}
We consider approximation problems for a special space 
of $d$ variate functions. 
We show that the problems have small number of active variables, 
as it has been postulated in the past using {\em concentration of measure} 
arguments.  We also show that, 
depending on the norm for measuring the error, 
the problems are strongly polynomially or quasi-polynomially tractable 
even in the model of computation where functional evaluations have 
the cost exponential in the number of active variables. 
\end{abstract}

\maketitle

\section{Introduction}
This paper is inspired by \cite{HegPes05}, where an importance of 
a special class of multivariate functions was advocated, and by 
recent results on tractability of problems dealing with 
infinite-variate functions, see 
\cite{CDMK,Gne10,HMNR10,KSWW09,NH10,NHMR10,PlaWas10,Was11,WW10a,WW10b}, 
where the cost of an algorithm depends on the number of 
active variables that it uses. 

The selection of functions in \cite{HegPes05} was based on a particular 
choice of the metric used in the space of the variables $x_i$ of the 
functions and on the smoothness of the functions. Here we consider 
the case where the $x_i$ denote features of some objects. Adding new 
features will increase the distance in general, and this increase can 
grow substantially with the dimension. For example, if $x_i\in [0,1]$ 
for $i=1,\ldots,d$ then the average squared Euclidean distance of 
two points grows proportional to the dimension $d$:
$$
   \int_{[0,1]^d}\int_{[0,1]^{d}} \sum_{i=1}^d (x_i-y_i)^2\, 
     \rd\bx\,\rd\by = O(d).
$$
This unbounded growth shows that Euclidean distance cannot 
approximate any distance function between two objects for large 
$d$. This is why it was suggested in~\cite{HegPes05} to use a 
scaled Euclidean distance to characterize the dissimilarity of 
two objects based on features $x_1,\ldots,x_d$:
\[
  {\rm dist}(\bx,\by) = \sqrt{\frac{1}{d}\sum_{i=1}^d (x_i-y_i)^2}.
\]

The continuity of functions considered in \cite{HegPes05} 
was Lipschitz-continuity based on the scaled 
Euclidean distance. For differentiable functions, this leads to conditions 
of bounded 
\[
  \sum_{i=1}^d d\cdot\left(\frac{\partial f}{\partial x_i}\right)^2\leq
  L_1
\]
where $L_1$ is the Lipschitz constant of $f$ with respect to the scaled
Euclidean distance. A model example is the mean function
\[
  f(\bx) = \frac{1}{d}\sum_{i=1}^d x_i.
\]
This function has a Lipschitz constant $L_1=1$. Consequently the gradient
satisfies
$$  
   \|\nabla f\|_{L_2} \leq \frac{1}{d^{1/2}}.
$$
It follows that $f$ is approximated with an $O(d^{-1/2})$ error 
by the constant $0.5$, i.e., the values of $f$ are concentrated 
around $0.5$. This concentration phenomenon for general 
Lipschitz-continuous functions was established by L\'evy 
in~\cite{Levy}. 

Higher order approximations can be derived in the case when higher order
Lipschitz constants are finite, i.e., where for some $m>0$ one has
\[
  \sum_{i_1\leq \cdots \leq i_m}d^m\cdot 
   \left(\frac{\partial^m f}{\partial x_{i_1}
       \cdots \partial x_{i_m}}\right)^2 \leq L_m^2.  
\]
Using the example of the mean function one has
$$
  \left(\frac{1}{d}\sum_{i=1}^d x_i - \frac{1}{2}\right)^2 = O(1/d).
$$
From this one gets the first order (additive function) approximation
$$
  \frac{2}{d^2} \sum_{i<j} x_i x_j = \frac{1}{d}\sum_{i=1}^d (1-x_i/d) x_i
  -1/4 + O(1/d).
$$
A similar approximation is obtained for the average squared distance
$\frac{2}{d(d-1)} \sum_{i<j} (x_i-x_j)^2$. Both these functions do 
satisfy a higher order Lipschitz condition with respect to the 
scaled norm introduced earlier. 

Classes of such functions and the particular scaling by $1/d^m$, 
where $m$ is equal to the number of involved variables, are 
related to the weighted reproducing kernel 
Hilbert space $\calH_d$ of multivariate functions on 
$[0,1]^d$ with the reproducing kernel given by 
\[
  \calK(\bx,\by)=1+\sum_{\setu\not=\emptyset}d^{-|\setu|}\cdot
    \prod_{j\in\setu} \min(x_j,y_j).
\]
Here the sum is over all subsets $\setu$ of $\{1,\dots,d\}$. 
This is why we consider such spaces in the current paper.
It is well known, see, e.g., \cite{KSWW09b}, that functions from that 
space have an ANOVA-like representation of the form 
\[
  f(\bx)=f_\emptyset+\sum_{\setu\not=\emptyset} f_\setu(\bx),
\]
where each component $f_\setu$ depends on, exactly, the variables listed in 
$\setu$. Hence, $\setu$ is the list of active variables in 
$f_\setu$ and the scaling parameter $m$ is equal to $|\setu|$. 
The corresponding norm of $f$ is given by 
\[
  \|f\|_{\calH_d}^2=|f_\emptyset|^2+\sum_{\setu\not=\emptyset}
     d^{|\setu|}\cdot\left\|\frac{\partial^{|\setu|} f_\setu}{
      \prod_{j\in\setu}\partial x_j}\right\|_{L_2}^2. 
\]

As already mentioned, it was also postulated in \cite{HegPes05} 
that functions of this 
form are well approximated by sums of those components $f_\setu$ that 
depend on small numbers of variables, i.e., 
with $\setu$ of small cardinality, or just 
by a constant function. 
We show in a more quantified way, that this is true for approximation 
problems with errors measured in a norm of another Hilbert space $\calG_d$ 
that also has a tensor product form. 
That is, we show that to approximate $f$ with an error not 
exceeding $\e\cdot\|f\|_{\calH_d}$, it is enough to consider only those terms 
$f_\setu$ that depend on at most $|\setu|\le m(\e,d)$ variables, where 
$m(\e,d)$ grows with $1/\e$ very slowly and/or decreases to zero 
when $d$ tends to infinity. 
More precisely, for general 
tensor product spaces (including the $L_2$ space), we have 
\[
   m(\e,d)\le \min\left(d\,,\,
     \frac{c\cdot\ln(1/\e)}{\ln(\ln(1/\e))}\right). 
\]
for a known constant $c>0$ that does not depend on $\e$ and $d$.
For instance, for any $d\in\naturals_+$ and the error demand 
$\e=10^{-q}$, we have 
\[
   m\left(10^{-2},d\right)\le5,\quad 
   m\left(10^{-4},d\right)\le8,\quad \mbox{and}\quad 
   m\left(10^{-8},d\right)\le14.
\] 
Suppose next that spaces $\calH_d$ and $\calG_d$ satisfy the 
following assumption: there exists $C<\infty$ such that 
\begin{equation}\label{new-ass}
  \|f\|_{\calG_d}^2\le C\cdot\sum_{\setu}
    \|f_\setu\|_{\calG_d}^2\quad\mbox{for all}\quad
    f=\sum_{\setu}f_\setu\in\calH_d.
\end{equation}
Then $m(\e,d)$ has even a smaller upper bound
\[
  m(\e,d)\le \min\left(d\,,\,\frac{2\cdot\ln(1/\e)}{\ln(d/c)}\right). 
\]
Hence for a fixed error demand $\e$, $m(\e,d)=O\left(1/\ln(d)\right)$ 
as $d\to\infty$. 

Actually, we prove these results for reproducing kernels of the form
\[
    \calK(\bx,\by)=1+\sum_{\setu\not=\emptyset}d^{-|\setu|}\cdot
    \prod_{j\in\setu} K(x_j,y_j)
\]
for a general class of univariate kernels $K:D\times D\to\reals$ 
including of course $K(x,y)=\min(x,y)$ and $D=[0,1]$.

We also study the tractability of approximation problems 
for algorithms that can use arbitrary linear functional evaluations. 
However, as it has been done in the recent study of infinite-variate 
problems, we assume that the cost of each such evaluation depends on
the number $k$ of active variables and is given by $\$(k)$. 
Under the general tensor product assumption, approximation is 
quasi-polynomially tractable, and it is strongly polynomially 
tractable if \eqref{new-ass} is satisfied. These 
results hold even when the cost function $\$$ is exponential. 
We also find a sharp upper bound on the exponent of strong 
tractability. 
Approximation is weakly tractable even when $\$$ is doubly exponential.

\section{Basic Definitions}

\subsection{Space of $d$-Variate Functions}
Let $D\subseteq\reals$ be a Borel measurable set and let $H=H(K)$ be a 
reproducing kernel Hilbert space (RKH space for short) of 
functions $f:D\to\reals$  whose kernel is denoted by $K$. 

We assume that 
\[
  1\notin H,
\]
where $1$ denotes the constant function $f(x)=1$ for all $x$. 

In what follows we write $[1..d]$ to denote the set of positive integers 
not exceeding $d$, 
\[
  [1..d]:=\{n\in\naturals_+\ :\ n\le d\}
\]
and use $\setu,\setv$ to denote subsets of $[1..d]$. Consider now the weights 
\begin{equation}\label{weights}
  \gamma_{d,\setu}:=d^{-|\setu|} \quad\mbox{for}\quad \setu\subseteq[1..d].
\end{equation}
Clearly $\gamma_{d,\emptyset}=1$. 

The weighted space of $d$-variate functions $f:D^d\to\reals$ 
under the consideration is the RKH space $\calH_d$ whose kernel is 
given by 
\[
  \calK_d(\bx,\by):=\sum_{\setu\subseteq[1..d]}\gamma_{d,\setu}
  \cdot K_\setu(\bx,\by)   \quad\mbox{and}\quad 
    K_\setu(\bx,\by)=\prod_{j\in\setu}K(x_j,y_j)
\]
with the convention that $K_\emptyset\equiv1$. 

For each $\setu$, by $H_\setu$ we denote the RKH space whose kernel 
is equal to $K_\setu$. Clearly $H_\emptyset={\rm span}\{1\}$ and $H_\setu
\simeq H^{\otimes |\setu|}$ for $\setu\not=\emptyset$. 
It is well known that the spaces $H_\setu$, as subspaces of 
$\calH_d$, are mutually orthogonal and any $f\in\calH_d$ has the 
unique representation
\[
  f(\bx)=\sum_{\setu\subseteq[1..d]}f_\setu(\bx) \quad\mbox{with}\quad 
  f_\setu\in H_\setu 
\]
and
\[
  \|f\|_{\calH_d}^2=\sum_{\setu\subseteq[1..d]}\|f_\setu\|_{\calH_d}^2
  =\sum_{\setu\subseteq[1..d]}\gamma_{d,\setu}^{-1}\cdot\|f_\setu\|_{H_\setu}^2.
\]
This representation is similar to the {\em ANOVA} decomposition 
since each term $f_\setu$ depends only on the variables 
listed in $\setu$. 
The space considered in \cite{HegPes05} and mentioned in the
Introduction is related to space $\calH_d$ with 
the classical Wiener kernel discussed in the following example.

\vskip 1 pc\noindent{\bf Example.\ }
Consider
\[
   D=[0,1]\quad \mbox{and}\quad K(x,y)=\min(x,y).
\]
Then $H$ is the space of functions $f:[0,1]\to \reals$ that vanish at 
zero, are absolutely continuous, and have $f'\in L_2([0,1])$. 
The norm in $H$ is given by 
\[
  \|f\|_H^2=\int_0^1|f'(x)|^2\,\rd x. 
\]
For $\setu\not=\emptyset$, $H_\setu$ consists of functions that 
depend only on the variables $x_j$ with $j\in\setu$, are zero 
if at least one of those variables is zero, have the 
mixed first-order partial derivatives bounded in the $L_2$ norm, 
and
\[
   \|f\|_{\calH_d}^2=|f({\bf 0})|^2+
      \sum_{\setu\not=\emptyset}d^{|\setu|}\int_{D^d}
\left|\prod_{j\in\setu}\frac{\partial}
     {\partial x_j} f([\bx;\setu])\right|^2\,\rd \bx\quad\mbox{for\ }
     f\in\calH_d,
\]
where $[\bx;\setu]$ is given by 
\[
   [\bx;\setu]=[y_1,\dots,y_d] \quad\mbox{with}\quad
    y_j:=\left\{\begin{array}{ll} x_j & \mbox{if $j\in\setu$},\\
                 0 & \mbox{otherwise.}\end{array}\right.
\]

\subsection{Function Approximation Problems}
For every $d\ge1$, let $\calG_d$ be a separable Hilbert space 
of functions on $D^d$ such that $\calH_d$ is continuously embedded in it. 
We denote the corresponding embedding operator by  $\calS_d$, i.e., 
\[
  \calS_d:\calH_d\to\calG_d\quad\mbox{and}\quad \calS_d(f)=f.
\]
We assume that $\calS_d$ and $\calG_d$ have  tensor product forms, i.e., 
for every $\setu$ and every $f(\bx)=\prod_{j\in\setu}f_j(x_j)$ with 
$f_j\in H$, we have 
\begin{equation}\label{ass-big}
  \|f\|_{\calG_d}=\prod_{j\in\setu}\|f_j\|_{\calG_1}.
\end{equation}
For simplicity of presentation we also assume that 
\[
   \|1\|_{\calG_1}=1 \quad\mbox{so that}\quad \|1\|_{\calG_d}=1.
\]
The continuity of $\calS_d$ is equivalent to 
continuity of~$\calS_1$. Indeed, let 
\begin{equation}\label{norm-S}
  C_0:=\sup_{\|f\|_H\le1}\|f\|_{\calG_1}<\infty.
\end{equation}
Then for every $\setu$ we have 
\[
   \sup_{\|f\|_{H_\setu}\le1}\|f\|_{\calG_d}=C_0^{|\setu|}
\]
and 
\[
  \|\calS_d\|^2\le\sum_{\setu\subseteq[1..d]}\gamma_{d,\setu}\cdot 
     C_0^{2|\setu|}=\sum_{k=0}^d\newton{d}{k}\cdot d^{-k}\cdot C_0^{2\cdot k}
     =\left(1+\frac{C_0^2}d\right)^d,
\]
since 
\[
   \|f\|_{\calG_d}^2\le 
\bigg(\sum_{\setu\subseteq[1..d]}C_0^{|\setu|}\cdot\|f_\setu\|_{H_\setu}\bigg)^2
\le \sum_{\setu\subseteq[1..d]}\gamma_{d,\setu} \cdot C_0^{2\cdot|\setu|}\cdot
   \|f\|_{\calH_d}^2.
\]
Clearly 
\[
  1\le \|\calS_d\|\le e^{C_0^2/2}\quad\mbox{for every $d$}
\]
which means that the corresponding approximation 
problem is properly scaled. 

Note also that the condition \eqref{new-ass} holds if 
\begin{equation}\label{new-ass2}
  \il 1, f\ir_{\calG_1}=0\quad\mbox{for all\ }f\in H.
\end{equation}
Actually, under \eqref{new-ass2} we have 
\[
   \|f\|_{\calG_d}^2=\sum_{\setu\subseteq[1..d]} 
     \|f_\setu\|_{\calG_d}^2\quad\mbox{for all\ }f\in\calH_d.
\]
Then we can get a better estimate of the norm of 
$\calS_d$:
\[
  \|f\|^2_{\calG_d}\le\sum_\setu d^{|\setu|}\cdot 
    \|f_\setu\|^2_{H_\setu}\cdot C_0^{2\cdot|\setu|}\cdot
     d^{-|\setu|}\le \|f\|^2_{\calH_d}\cdot 
      \max_{k\le d}C_0^{2\cdot k}\cdot d^{-k}.
\]
Since the estimation above is sharp, we conclude that 
\[
  \|\calS_d\|=\max_{k\le d} C_0^k\cdot d^{-k/2}
\]

The class of such approximation problems contains the following 
{\em weighted}-$L_2$ approximation.

\subsubsection{Weighted $L_2$ Approximation}
Let $\rho$ be a given probability density function (p.d.f. for 
short) on $D$. Without loss of generality, suppose that $\rho$ is positive
(a.e.) on $D$. Then the $L_2(\rho_d,D^d)$ space with 
finite
\[
   \|f\|_{L_2(\rho_d,D^d)}^2=\int_{D^d}|f(\bx)|^2\cdot\rho_d(\bx)\,
   \rd\bx,
\]
is a well defined Hilbert space. Here by $\rho_d$ we mean
\[
  \rho_d(\bx)=\prod_{j=1}^d\rho(x_j).
\]
We then take
\[
  \calG_d=L_2(\rho_d,D^d).
\]
It is well known that the continuity of $\calS_1$ is equivalent 
to the continuity of the following integral operator 
\[
  \calW_1:=\calS_1^*\circ\calS_1:H\to H,\qquad 
  \calW_1(f)(x)=\int_d f(y)\cdot K(x,y)\cdot
    \rho(y)\,\rd y,
\]
since then $\|\calS_1\|^2$ is equal to the largest eigenvalue 
of $\calW_1$, i.e.,
\[
  C_0^2=\max\left\{\lambda\ :\ 
    \lambda\in{\rm spect}(\calW_1)\right\}.
\]
Then 
\[
  1\le \|\calS_d\|^2\le \left(1+\frac{C_0^2}{d}\right)^d.
\]

The condition \eqref{new-ass2} is now equivalent to 
\[
    \int_D f(x)\cdot\rho(x)\,\rd x=0\quad\mbox{for all\ }f\in H,
\]
which is satisfied by various spaces of periodic functions.

\subsection{Algorithms, Errors and Cost}
Since problems considered in this paper are defined over 
Hilbert spaces, we can restrict the attention to 
linear algorithms only, see e.g., \cite{TWW88}, of the form 
\[
  \calA_n(f)=\sum_{j=1}^n L_j(f)\cdot a_j, 
\]
where $L_j$ are continuous linear functionals and $a_j\in\calG_d$.
In the worst case setting considered in this paper, the 
error of an algorithm $\calA_n$ is defined by 
\[
   {\rm error}(\calA_n;\calH_d,\calG_d):=\sup_{f\in\calH_d}
     \frac{\|f-\calA_n(f)\|_{\calG_d}}{\|f\|_{\calH_d}}.
\]

So far, in the complexity study of problems with 
finitely many variables, it has been assumed that the cost 
of an algorithm is given by the number $n$ of functional evaluations. 
We believe that, similar to problems with infinitely many variables, 
the cost of computing $L(f)$ should depend on the {\em number 
of active variables} of $L$. 
More precisely, for given $L\in\calH_d^*$, 
let $h_L\in\calH_d$ be its generator, i.e., 
\[
  L(f)=\il f, h_L\ir_{\calH_d}\quad\mbox{for all $f\in\calH_d$}.
\]
Then $h_L=\sum_{\setu\subseteq[1..d]}h_\setu$, 
\[
   {\rm Act}(L):=\bigg|\bigcup\bigg\{\setv\ :\ h_\setv\not=0, \ 
    h_L=\sum_{\setu\subseteq[1..d]}h_\setu\bigg\}\bigg|
\]
is the number of active variables in $L$, and  
the cost of evaluating $L(f)$ is equal to 
\[
    \$({\rm Act}(L)),
\]
where $\$:\naturals_+\to\reals_+$ is a given {\em cost function}. 
The only assumptions that we make at this point are 
\[
  \$(0)\ge1\quad\mbox{and}\quad \$(k)\le \$(k+1) \mbox{\ \ for 
     all\ \ $k\in\naturals$}.
\]
This includes 
\[
  \$(k)=(k+1)^q, \quad \$(k)=e^{q\cdot k},\quad\mbox{and}\quad
  \$(k)=e^{e^{q\cdot k}}
\]
for some $q\ge0$. 
Then the {\em (information) cost} of $\calA_n=\sum_{j=1}^n
L_j(f)\cdot a_j$ is given by 
\[
   {\rm cost}(\calA_n):=\sum_{j=1}^n \$({\rm Act}(L_j)).
\]

The tractability results obtained so far for functions with 
finite numbers of variables correspond to $\$\equiv1$. In our 
opinion, it makes sense to assume that the cost function is 
at least linear, i.e., 
\[
  \$(k)\ge c\cdot(k+1), \quad k \in \naturals.
\]

\subsection{Information Complexity and Tractability}
By {\em (information) complexity} we mean the minimal information 
cost among all algorithms with errors not exceeding a given error 
demand. That is, for $\e\in(0,1)$, 
\[
  {\rm comp}(\e;\calH_d,\calG_d):=\inf\left\{
    {\rm cost}(\calA)\ :\ {\rm error}(\calA;\calH_d,\calG_d)\le\e
      \right\}. 
\]

We now recall the definition of three kinds of tractabilities.
For a detailed discussion of tractability concepts and results, 
we refer to excellent monographs \cite{NW08,NW10}. We stress 
however, that those results pertain to the constant cost function, 
$\$\equiv 1$. 

We say that the problem $\calS_d$ (or more precisely the sequence 
of problems $\calS_d$) is {\em polynomially tractable} if there 
exist $c,p,q\ge0$ such that 
\[
  {\rm comp}(\e;\calH_d,\calG_d)\le c\cdot \frac{d^q}{\e^p} 
   \quad\mbox{for all $\e\in(0,1)$ and $d\in\naturals_+$}.
\]
It is {\em strongly polynomially tractable} iff the above inequality 
holds with $q=0$, and {\em weakly tractable} iff
\[
  \limsup_{d+1/\e\to\infty}
 \frac{\ln\left({\rm comp}(\e;\calH_d,\calG_d)\right)}{d+1/\e} 
   =0.
\]
When the problem is strongly polynomially tractable then 
\[
  p^{\rm str}:=\inf\left\{p\ :\ \sup_{\e,d} \e^p\cdot 
   {\rm comp}(\e;\calH_d,\calG_d)<\infty \right\}
\]
is called the {\em exponent of strong tractability}. 

There is also a concept of {\em quasi-polynomial tractability} 
introduced recently, see \cite{GneWoz11}. It is weaker than 
polynomial tractability and stronger than weak tractability. 
More precisely, the problem is quasi-polynomially tractable 
if there exist $c,t\ge0$ such that 
\[
  {\rm comp}(\e;\calH_d,\calG_d)\le c\cdot\exp\left(t\cdot(1+\ln(d))
     \cdot(1+\ln(1/\e))\right)
   \quad\mbox{for all $\e\in(0,1)$ and $d\in\naturals_+$}.
\]
This means that ${\rm comp}(\e;\calH_d,\calG_d)
\le c\cdot(e\cdot d)^{t\cdot(1+\ln(1/\e))}$. 
The significance of the quasi-polynomial tractability is that for 
some applications, $d$ can be very large but $\e$ need not be very 
small, say 
$\e=10^{-2}$. Then the complexity of the problem is bounded by a 
polynomial in $d$. 

As we shall prove in the next Sections, the problems considered in 
this paper are quasi-polynomially tractable even when the cost 
function $\$$ is exponential in $d$. 

\section{Results}

\subsection{Number of Active Variables} 
We are interested in a number $m=m(\e,d)$ such that, 
for any $f\in\calH_d$, the terms $f_\setu$ with $|\setu|>m$ 
can be neglected, i.e., 
\begin{equation}\label{ess-p1}
   \bigg\|\sum_{|\setu|>m(\e,d)}f_\setu\bigg\|_{\calG_d}
    \le \e\cdot\bigg\|\sum_{|\setu|>m(\e,d)}f_\setu\bigg\|_{\calH_d}.
\end{equation}
Hence, 
to approximate $\calS_d(f)$ with error bounded by $\e\sqrt{2}$, 
it is enough to use algorithms with functionals $L_j$ that have 
${\rm Act}(L_j)\le m(\e,d)$. 

We first find $m(\e,d)$ for the general tensor product space $\calG_d$ 
and next for the special case \eqref{new-ass}. To distinguish 
between the two cases, we will write respectively $m_1=m_1(\e,d)$ and 
$m_2=m_2(\e,d)$ instead of $m=m(\e,d)$. 

\subsubsection{General Case} 
For given $\e\in(0,1)$ and $d\in\naturals_+$, define  
\begin{equation}\label{def-m1}
   m_1=m_1(\e,d):=\min\bigg\{m\ :\ 
  \sum_{k=m+1}^d \newton{d}{k}\cdot\left(\frac{C_0^2}d\right)^k  
   \le\e^2\bigg\}. 
\end{equation}
Of course, $m_1(\e,d)$ is well defined and is bounded by $d$. 

\begin{proposition}
For every $d$, $\e\in(0,1)$, and $f\in\calH_d$, \eqref{ess-p1} 
holds with $m=m_1(\e,d)$ given by \eqref{def-m1}. 
Moreover, $m_1(\e,d)$ is bounded from above by $\min(d,M)$, 
where $M=M(\e)$ is the solution of 
\[
  \frac{(M+1)!}{C_0^{2\cdot(M+1)}}=\frac{e^{C_0^2}}{\e^2}.
\]
In particular, there exists a constant $C_1$ such that 
\[
   m_1(\e,d) \le C_1\cdot \frac{\ln(1/\e)}{\ln(\ln(1/\e))}
   \quad\mbox{for all\ }\e<e^{-e}. 
\]
\end{proposition}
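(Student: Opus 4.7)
The plan is to dispatch the three assertions in order: the first is a direct consequence of the Cauchy--Schwarz computation already displayed in the excerpt; the second follows by majorizing the defining sum by the tail of the exponential series; the third is an application of Stirling's formula. To verify \eqref{ess-p1}, fix $f\in\calH_d$, set $g:=\sum_{|\setu|>m_1(\e,d)} f_\setu\in\calH_d$, and repeat verbatim the triangle-inequality-plus-Cauchy--Schwarz argument used in the excerpt to bound $\|\calS_d\|$, but restricted to the index set $\{\setu : |\setu|>m_1(\e,d)\}$. This yields
\[
 \|g\|_{\calG_d}^2 \le \bigg(\sum_{k=m_1+1}^d \newton{d}{k}\bigg(\frac{C_0^2}{d}\bigg)^k\bigg)\cdot \|g\|_{\calH_d}^2,
\]
and the bracket is at most $\e^2$ by the very definition of $m_1(\e,d)$.

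For the bound involving $M$, use $\binom{d}{k}\le d^k/k!$ to majorize the partial sum by a tail of the exponential series and then factor out the leading term:
\[
 \sum_{k=m+1}^d \newton{d}{k}\bigg(\frac{C_0^2}{d}\bigg)^k \le \sum_{k=m+1}^\infty \frac{C_0^{2k}}{k!} \le \frac{C_0^{2(m+1)}}{(m+1)!}\cdot e^{C_0^2},
\]
where the last inequality uses $(m+1)!/(m+1+j)!\le 1/j!$ to bound the residual sum by $\sum_{j\ge 0} C_0^{2j}/j! = e^{C_0^2}$. Hence whenever $(m+1)!/C_0^{2(m+1)}\ge e^{C_0^2}/\e^2$ the bracket is at most $\e^2$, so by the definition of $m_1$ we obtain $m_1(\e,d)\le m$; the smallest such $m$ is the one implicitly given by the stated equation in $M$.

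For the asymptotic estimate, take logarithms in that equation, write $N=M+1$, and invoke Stirling ($\ln N!\ge N\ln N - N$) to reduce the task to finding $N$ with
\[
 N\bigl(\ln N - 1 - 2\ln C_0\bigr) \ge C_0^2 + 2\ln(1/\e).
\]
Setting $L=\ln(1/\e)$ and trying $N = C_1\,L/\ln L$ makes the left-hand side equal to $C_1 L + o(L)$ as $\e\to 0$, so any fixed $C_1>2$ works for all sufficiently small $\e$. On the compact residual range of $\e\in(0,e^{-e})$ where the Stirling asymptotic has not yet dominated, the solution $M(\e)$ is already bounded by a constant depending only on $C_0$, so enlarging $C_1$ if necessary produces a single universal $C_1=C_1(C_0)$ valid throughout $\e<e^{-e}$. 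The only nonroutine point, which I expect to be the main obstacle, is precisely this final matching of the asymptotic and boundary regimes so that one constant $C_1$ serves the whole range; everything else is bookkeeping.
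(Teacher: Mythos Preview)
Your argument is correct and follows essentially the same route as the paper's proof: the triangle-inequality-plus-Cauchy--Schwarz estimate for \eqref{ess-p1}, the bound $\binom{d}{k}\le d^k/k!$ followed by factoring out $C_0^{2(m+1)}/(m+1)!$ and using $(m+1)!/(m+1+j)!\le 1/j!$ (the paper writes this as dividing by $\binom{m+1+j}{j}\ge 1$, which is the same inequality). The paper actually omits any justification of the final asymptotic bound $m_1(\e,d)\le C_1\ln(1/\e)/\ln\ln(1/\e)$, simply asserting it; your Stirling computation supplies that missing detail correctly, so your write-up is in fact more complete than the original on this point.
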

\begin{proof}
Of course, \eqref{ess-p1} holds if $m_1(\e,d)=d$. 
Therefore we consider only the case when $m_1=m_1(\e,d)<d$. We have 
\begin{eqnarray*}
  && \bigg\|\sum_{|\setu|>m_1} f_\setu\bigg\|_{\calG_d}\, 
   \le\, \sum_{|\setu|>m_1} \|f_\setu\|_{\calG_d}
     \,\le\,\sum_{|\setu|>m_1}
     \|f_\setu\|_{H_\setu}\cdot C_0^{|\setu|}\\
   &&\le \bigg[\sum_{|\setu|>m_1}\gamma_{d,\setu}^{-1}\cdot
     \|f_\setu\|_{H_\setu}^2\bigg]^{1/2}\cdot
  \bigg[\sum_{|\setu|>m_1}\gamma_{d,\setu}
    \cdot C_0^{2\cdot|\setu|}\bigg]^{1/2}\\
   &&= \bigg\|\sum_{|\setu|>m_1} f_\setu\bigg\|_{\calH_d}
     \cdot \bigg[\sum_{|\setu|>m_1}\gamma_{d,\setu}
    \cdot C_0^{2\cdot|\setu|}\bigg]^{1/2}\\
  &&  = \bigg\|\sum_{|\setu|>m_1} f_\setu\bigg\|_{\calH_d}
     \cdot\bigg[\sum_{k=m_1+1}^d\newton{d}{k}\cdot 
      d^{-k}\cdot C_0^{2\cdot k}\bigg]^{1/2}\\
  &&\le \bigg\|\sum_{|\setu|>m_1}
    f_\setu\bigg\|_{\calH_d}\cdot\e.
\end{eqnarray*}
This completes the proof of the first part. 
We now estimate the number $m_1(\e,d)$. Observe that, 
for any $m<d$, we have 
\begin{eqnarray*}
 && \sum_{k=m+1}^d\newton{d}{k}\cdot\left(\frac{C_0^2}d\right)^k
 \,=\, \sum_{k=m+1}^d C_0^{2\cdot k}\cdot \frac{d\cdots(d-k+1)}{d^{\,k}\cdot k!}\\
   &&\le\, \sum_{k=m+1}^d \frac{C_0^{2\cdot k}}{k!} 
   \,\le \,\frac{C_0^{2\cdot(m+1)}}{(m+1)!}\sum_{j=0}^\infty C_0^{2\cdot j}
    \frac{(m+1)!}{(m+1+j)!}\\
   &&= \frac{C_0^{2\cdot(m+1)}}{(m+1)!}\sum_{j=0}^\infty 
    \frac{C_0^{2\cdot j}}{j!} / \newton{m+1+j}{j} 
    \le \frac{C_0^{2\cdot(m+1)}\cdot e^{C_0^2}}{(m+1)!}. 
\end{eqnarray*}
This completes the proof. 
\end{proof}

\begin{remark}
One can slightly improve the estimate of $m_1(\e,d)$ by letting 
$M=M(\e)$ to be the minimal integer such that $C_0^2/(M+1)<1$ and 
\[
  (M+1)!/C_0^{2\cdot(M+1)}\ge \frac1{\e^2\cdot(1-C_0^2/(M+1))}.
\]
This is because the last sum in the proof above can be bounded as
follows:
\[
\sum_{j=0}^\infty 
    \frac{C_0^{2\cdot j}}{j!} / \newton{m+1+j}{j}\le
  \sum_{j=0}\left(\frac{C_0^2}{m+1}\right)^j=\frac1{1-C_0^2\cdot(m+1)}.
\]
\end{remark}

We calculated the values of $\lceil M(\e)\rceil$ for $\e=10^{-q}$ with
$q=1,\dots,10$ for the function approximation problem with the 
Wiener kernel on $[0,1]$ and $\rho(x)\equiv1$. Recall that then 
$C_0^2=1/2$. These values are listed in the following table.
\[
  \begin{array}{c||r|r|r|r|r|r|r|r|r|r|r|r|r}
      q & 1 & 2 & 3 & 4 & 5 & 6 & 7 & 8 & 9 & 10 \\
     \hline     \lceil M(10^{-q})\rceil 
        & 3 & 5 & 7 & 8 &10 &11 &13 &14 &15 & 17 
  \end{array}
\]

\subsubsection{Special Case \eqref{new-ass}}
We now investigate the number of active variables under the 
assumption \eqref{new-ass}. Then, for any $k<d$, 
\begin{eqnarray*}
  \bigg\|\sum_{|\setu|> k}f_\setu\bigg\|_{\calG_d}^2
   &\le& C\cdot  \sum_{|\setu|> k}\|f_\setu\|_{\calG_d}^2\\
  &\le&C\cdot\sum_{|\setu|> k} C_0^{2\cdot|\setu|}\cdot
    \gamma_{d,\setu}^{-1}\cdot\gamma_{d,\setu}\cdot
      \|f_\setu\|_{H_\setu}^2\\
  &\le& C\cdot \max_{\ell>k}\left(C_0^{2\cdot\ell}\cdot d^{-\ell}\right)
   \cdot\bigg\|\sum_{|\setu|>k}f_\setu\bigg\|_{\calG_d}^2.  
\end{eqnarray*}
Therefore, for $m_2=m_2(\e,d)$ given by 
\begin{equation}\label{def-m2}
    m_2:=\left\{\begin{array}{ll} 
   0 & \mbox{if $d<C_0^2$ and $(C_0^2/d)^d\le\e^2/C$,}\\
   d & \mbox{if $d<C_0^2$ and $(C_0^2/d)^d>\e^2/C$,}\\
   \min\left(k\ :\ (C_0^2/d)^{k+1}\le \e^2/C\right) & 
    \mbox{otherwise},\end{array}\right.
\end{equation}
we have the following proposition. 

\begin{proposition}
Suppose that \eqref{new-ass} is satisfied. 
For every $d$, $\e\in(0,1)$, and $f\in\calH_d$, 
\eqref{ess-p1} holds with $m=m_2(\e,d)$ 
given by \eqref{def-m2}. Moreover, 
for $d\ge C_0^2$, 
\[
   m_2(\e,d)\le \min\left(d\,,\,
   \left\lceil\frac{\ln(C/\e^2)}{\ln(d/C_0^2)}
     \right\rceil-1\right)\quad\mbox{and}\quad 
    m_2(\e,d)=O\left(\ln^{-1}(d)\right)\quad\mbox{as\ }
    d\to\infty.
\]
\end{proposition}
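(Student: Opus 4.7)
The plan is to note that the first claim is essentially already on the page: the displayed chain of inequalities preceding the proposition (whose last line I read as the $\calH_d$-norm rather than $\calG_d$-norm, evidently a typo) delivers, for every $k\in\{0,\dots,d-1\}$,
\[
  \bigg\|\sum_{|\setu|>k}f_\setu\bigg\|_{\calG_d}^2
  \,\le\, C\cdot \max_{k<\ell\le d}\left(\frac{C_0^2}{d}\right)^{\ell}\cdot
   \bigg\|\sum_{|\setu|>k}f_\setu\bigg\|_{\calH_d}^2.
\]
Thus \eqref{ess-p1} with $m=k$ is reduced to the scalar inequality $C\cdot\max_{k<\ell\le d}(C_0^2/d)^\ell\le\e^2$, and all that remains is to locate the smallest integer $k$ for which it holds.

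Next I would split on the sign of $\ln(C_0^2/d)$. If $d<C_0^2$ then $\ell\mapsto(C_0^2/d)^\ell$ is strictly increasing, so the maximum over $k<\ell\le d$ is attained at $\ell=d$ and equals $(C_0^2/d)^d$. If this quantity already does not exceed $\e^2/C$, then $m=0$ works; otherwise no proper $k<d$ suffices and one must fall back to $m=d$, for which \eqref{ess-p1} is trivial because the sum on the left is empty. If $d\ge C_0^2$ then $\ell\mapsto(C_0^2/d)^\ell$ is non-increasing, so the maximum is at $\ell=k+1$ and the required scalar inequality collapses to $(C_0^2/d)^{k+1}\le\e^2/C$. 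These three alternatives correspond exactly to the three branches of \eqref{def-m2}, which verifies the first part of the statement.

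For the quantitative bound in the regime $d\ge C_0^2$, I would take logarithms of $(C_0^2/d)^{k+1}\le \e^2/C$; since $\ln(d/C_0^2)\ge 0$ this becomes $(k+1)\ln(d/C_0^2)\ge \ln(C/\e^2)$. Any integer $k$ with $k+1\ge \lceil\ln(C/\e^2)/\ln(d/C_0^2)\rceil$ therefore satisfies the required inequality, so the minimal such $k$ is at most $\lceil\ln(C/\e^2)/\ln(d/C_0^2)\rceil-1$. Combining this with the trivial bound $m_2(\e,d)\le d$ yields the displayed $\min$ estimate. The asymptotic $m_2(\e,d)=O(1/\ln d)$ as $d\to\infty$ with $\e$ fixed is then immediate, because the numerator $\ln(C/\e^2)$ is a constant while $\ln(d/C_0^2)\to\infty$.

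I do not anticipate a real obstacle; the only delicate point is the boundary $d=C_0^2$, where $\ln(d/C_0^2)=0$ makes the ceiling expression formally infinite. This is harmless because the $\min$ with $d$ then automatically selects $m_2(\e,d)\le d$, which is consistent with the (now necessarily trivial) first or second branch of \eqref{def-m2} being activated there. The rest is book-keeping around whether $d$ is above or below $C_0^2$.
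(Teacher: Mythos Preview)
Your proposal is correct and follows exactly the paper's approach: the paper gives the displayed chain of inequalities immediately before the proposition (with the typo $\calG_d\to\calH_d$ in the last line that you correctly spotted), defines $m_2$ by \eqref{def-m2}, and then states the proposition without further proof. You have simply filled in the case analysis and the logarithmic manipulation that the paper leaves implicit, and your handling of the boundary case $d=C_0^2$ is as careful as one can be given that the paper itself is slightly informal there.
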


\subsection{Changing Dimension Algorithm}
We consider in this section 
very special algorithms that are from the family of {\em changing 
dimension algorithms} introduced in \cite{KSWW09} for integration 
and in \cite{WW10a,WW10b} for approximation of functions with 
infinitely many variables. As shown recently in \cite{Was11}, 
these algorithms yield polynomial tractability for 
weighted $L_2$ approximation problems with infinitely many variables 
and general weights that have the decay greater than one. 

These results are not applicable in this paper since the weights
 $\gamma_{d,\setu}=d^{-|\setu|}$ have decay exactly one. 
However, these weights still allow for quasi-polynomial tractability 
and strong polynomial tractability if \eqref{new-ass} holds. 

More precisely, let $\{(\lambda_{1,n},\zeta_{1,n})\}_{n=1}^\infty$ be the 
set of eigenpairs of the operator 
\[
  W_1:=S_1^*\circ S_1:H\to H
\]
for the class $H$ of univariate functions. 
We assume that $\lambda_{1,n}$ are monotonically decreasing to zero 
with a polynomial speed, i.e., that 
\begin{equation}\label{alpha}
  \alpha:={\rm decay}\left(\{\lambda_{1,n}\}_{n=1}^\infty\right)>0.
\end{equation}
Recall that the decay of a sequence of positive numbers $a_n$ is 
defined by 
\[
   {\rm decay}\left(\{a_n\}_{n=1}^\infty\right):=
  \sup\left\{t\ :\ \sum_{n=1}^\infty a_n^{1/t}<\infty\right\}.
\]
For instance, the decay of $a_n=\Theta\left(n^{-\beta}
\cdot\ln^\delta(n)\right)$ 
is equal to $\beta$. We also assume that $\zeta_{1,n}$'s 
form a complete orthonormal system in $H$.
It is well known that the constant $C_0$ is equal to 
the square-root of the largest eigenvalue of $W_1$, i.e., 
\[
  C_0=\sqrt{\lambda_{1,1}}.
\]

\subsubsection{General Case}
Consider the operator 
\[
  W_\setu=S_\setu^*\circ S_\setu:H_\setu\to H_\setu
\]
for the space $H_\setu$. Due to the  tensor product structure 
of $S_\setu$ and $H_\setu$, the eigenpairs of $W_\setu$ 
are provided by the products of 
the eigenpairs for the univariate case. Let 
$\{\lambda_{\setu,n}\}_{n=1}^\infty$ be the set of all the eigenvalues 
of $W_\setu$ listed in the decreasing order, $\lambda_{\setu,n}\ge
\lambda_{\setu,n+1}$. We now use a standard technique to estimate 
these eigenvalues. For that purpose note that, for any 
\[
  \tau>1/\alpha,
\]
we have 
\[
  \sum_{n=1}^\infty\lambda_{\setu,n}^\tau=[L(\tau)]^{|\setu|}
    \quad\mbox{with}\quad 
   L(\tau):=\sum_{n=1}^\infty \lambda_{1,n}^\tau<\infty.
\]
Therefore the $n$th largest eigenvalue $\lambda_{\setu,n}$ satisfies 
\[
  n\cdot \lambda_{\setu,n}^\tau\le [L(\tau)]^{|\setu|}, 
    \quad\mbox{i.e.,}\quad
    \lambda_{\setu,n}\le \frac{[L(\tau)]^{|\setu|/\tau}}{n^{1/\tau}}.
\]
Let $\zeta_{\setu,n}$ be the normalized eigenfunction corresponding 
to the eigenvalues $\lambda_{\setu,n}$. It is well known, see, e.g., 
\cite{TWW88}, that the algorithm 
\[
  A^*_{\setu,n}(f):=\sum_{j=1}^n\il f, \zeta_{\setu,j}\ir_{H_\setu}
   \cdot \zeta_{\setu,j}
\]
have the minimal errors among all algorithms using $n$ functional 
evaluations and
\[
  {\rm error}(A^*_{\setu,n};H_\setu,\calG_\setu)=\sqrt{\lambda_{\setu,n+1}}
   \le \left[\frac{[L(\tau)]^{|\setu|}}{n+1}\right]^{1/(2\cdot\tau)}.
\]
 
Since $H_\setu$ are orthogonal subspaces of $\calH_d$, the 
algorithms $A^*_{\setu,n}$ are naturally extendable to $\calH_d$ and 
\[
  A^*_{\setu,n}\bigg(\sum_{\setv\subseteq[1..d]}f_\setv\bigg)=
   A^*_{\setu,n}(f_\setu).
\]
Moreover, 
\[
  {\rm cost}(A^*_{\setu,n})\le n\cdot\$(|\setu|).
\]

We are ready to define the algorithms $\calA_{\e,d}$ for the weighted 
space $\calH_d$. For $\e\in(0,1)$, let
\begin{equation}\label{def-alg1}
  \calA_{\e,d}(f):=\il f,1\ir_{H_\emptyset}+
    \sum_{1\le|\setu|\le m_1(\e,d)}A^*_{\setu,n_\setu}(f),
\end{equation}
where
\begin{equation}\label{def-alg2}
  n_\setu=n_{\setu,\e}:=\left\lfloor \frac{[L(\tau)]^{|\setu|}}
   {\e_\setu^{2\cdot\tau}}\right\rfloor \quad\mbox{and}\quad
  \e_\setu=\e_{\setu,d}:=\frac{\e\cdot d^{|\setu|/(2(1+\tau))}}
    {\sqrt{R}}
\end{equation}
with
\[
  R=R(\e,d):=\sum_{k=1}^{m_1(\e,d)}\newton{d}{k}\cdot 
    d^{-k\cdot\tau/(1+\tau)}.
\]

Since $n_\setu$ depends on $\setu$ only via $|\setu|$, we will sometimes 
write $n_{|\setu|}$ or $n_\ell$ if $|\setu|=\ell$ instead of 
$n_\setu$. 

Note that 
\[
  R\le\sum_{k=1}^{m_1(\e,d)} \frac{d^k\cdot d^{-k\cdot\tau/(1+\tau)}}{k!}
   = \sum_{k=1}^{m_1(\e,d)} \frac{d^{k/(1+\tau)}}{k!}
   \le m_1(\e,d)\cdot \frac{d^{\ell^*/(1+\tau)}}{(\ell^*)!},
\]
where
\[
   \ell^*=\ell^*(\e,d):=\min\left(m_1(\e,d)\,,\,
    \left\lfloor d^{1/(1+\tau)}\right\rfloor\right).
\]
This follows from the fact that the sequence 
$d^{k/(1+\tau)}/{k!}$ increases until $k\le d^{1/(1+\tau)}$, 
and next starts to decrease, as can be easily verified. Hence 
\[
  R^{1+\tau}\le \left\{\begin{array}{ll} 
   d^{m_1(\e,d)}/((m_1(\e,d)-1)!)^{1+\tau} & \mbox{for\ }
  d> (m_1(\e,d))^{1+\tau},\\ \ \\
    m_1(\e,d)\cdot e^{m_1(\e,d)}  & 
    \mbox{otherwise},
   \end{array}\right.
\]
where in the second case we replaced $(\ell^*)!$ by 
$(\ell^*/e)^{\ell^*}$ and used the fact that $\ell^*\le m_1(\e,d)$. 
This means that 
\[
   (R(\e,d))^{1+\tau}\le C_1\frac{\ln(1/\e)}{\ln(\ln(1/\e))}
    \cdot\e^{-C_1/\ln(\ln(1/\e))} \qquad\mbox{if}\quad 
    m_1(\e,d)\ge d^{1/(1+\tau)}
\]
and 
\[     
   (R(\e,d))^{1+\tau} \le \e^{\,C_1/[(1+\tau)\cdot\ln(\ln(1/\e))]}
    \cdot d^{\,C_1\cdot\ln(1/\e)/\ln(\ln(1/\e))} 
    \qquad\mbox{if}\quad m_1(\e,d)<d.
\]
Of course, in all the above estimates, we assume that 
$\e<e^{-e}$. 

We now estimate the error of the algorithm $\calA_{\e,d}$. Since 
$\calA_{\e,d}(f_\setu)=0$ for all $f_\setu$ with
$|\setu|>m_1(\e,d)$, we have 
\[
  \left[{\rm error}(\calA_{\e,d};\calH_d,\calG_d)\right]^2 =
  \sum_{1\le|\setu|\le m_1(\e,d)}\gamma_{d,\setu}\cdot
    \left[{\rm error}(A^*_{\setu,n_\setu};H_\setu,\calG_d)\right]^2 
    +  \sum_{|\setu|>m_1(\e,d)}\gamma_{d,\setu}\cdot  C_0^{2\cdot|\setu|}
\]
The latter sum satisfies 
\[
  \sum_{|\setu|>m_1(\e,d)}\gamma_{d,\setu}\cdot 
     C_0^{2\cdot|\setu|}=\sum_{k=m_1(\e,d)+1}^d\newton{d}{k}\cdot 
    \left(\frac{C_0^2}d\right)^k\le \e^2,
\]
whereas the former sum is bounded by 
\begin{eqnarray*}
  \sum_{1\le|\setu|\le m_1(\e,d)}\gamma_{d,\setu}\cdot 
   \e_\setu^2 & =& \sum_{\ell=1}^{m_1(\e,d)} \newton{d}{l}\cdot d^{-\ell}
    \cdot\e_\ell^2\\
  & =& \e^2\cdot R^{-1}\cdot \sum_{\ell=1}^{m_1(\e,d)}\newton{d}{\ell}
      \cdot d^{-\ell}\cdot d^{\ell/(1+\tau)}\\
   &=&\e^2.
\end{eqnarray*}
This means that 
\[
  {\rm error}(\calA_{\e,d}; \calS_d,\calH_d)\le \e\cdot\sqrt{2}. 
\]

We now estimate the cost of $\calA_{\e,d}$:
\[
  {\rm cost}(\calA_{\e,d})\le \$(0)+\sum_{1\le|\setu|\le m_1(\e,d)}
  \$(|\setu|)\cdot n_\setu\le
   \$(0)+\$(m_1(\e,d))\sum_{1\le|\setu|\le m_1(\e,d)} n_\setu
\]
and 
\begin{eqnarray*}
  \sum_{1\le|\setu|\le m_1(\e,d)} n_\setu &=&
   \sum_{\ell=1}^{m_1(\e,d)}\newton{d}{\ell}\cdot n_\ell\,
  \le\, \e^{-2\cdot\tau}\cdot R^\tau\cdot 
   \sum_{\ell=1}^{m_1(\e,d)}\newton{d}{\ell}\cdot 
   \frac{[L(\tau)]^\ell}{d^{\ell\cdot\tau/(1+\tau)}}\\
   &\le& \max\left(L(\tau), [L(\tau)]^{m_1(\e,d)}\right)\cdot 
      \frac{R^{1+\tau}}{\e^{2\cdot\tau}}.
\end{eqnarray*}

We summarize this in the following theorem.

\begin{theorem}\label{thm:main}
Suppose that \eqref{alpha} holds. The approximation problem 
is quasi-polynomially tractable 
even if $\$$ is an exponential function of $d$, 
$\$(d)=O\left(e^{q\cdot d}\right)$,  and is weakly tractable even if 
$\$(d)=O\left(e^{e^{q\cdot d}}\right)$ for some $q\ge0$. 
Moreover, for any $\tau>1/\alpha$, 
the algorithms $\calA_{\e,d}$ have errors bounded by 
$\e\cdot\sqrt{2}$ and cost bounded by 
\[
  {\rm cost}(\calA_{\e,d})\le\$(0)+\$(m_1(\e,d))\cdot \max\left(
   L(\tau),[L(\tau)]^{m_1(\e,d)}\right)\cdot 
    \frac{[R(\e,d)]^{1+\tau}}{\e^{2\cdot\tau}},
\]
where $m_1(\e,d)$ is given by \eqref{def-m1}, e.g., 
\[
   m_1(\e,d)\le C_1\cdot\frac{\ln(1/\e)}{\ln(\ln(1/\e))} 
   \quad\mbox{for\ }\e<e^{-e}, 
\]
and 
\[
  [R(\e,d)]^{1+\tau}\le\left\{\begin{array}{ll}
  C_1\frac{\ln(1/\e)}{\ln(\ln(1/\e))}
    \cdot\e^{-C_1/\ln(\ln(1/\e))} &\mbox{if}\quad 
    m_1(\e,d)\ge d^{1/(1+\tau)},\\ 
    \ \\
   \e^{\,C_1/[(1+\tau)\cdot\ln(\ln(1/\e))]}
    \cdot d^{\,C_1\cdot\ln(1/\e)/\ln(\ln(1/\e))} & 
    \mbox{otherwise}.
    \end{array}\right.
\]
\end{theorem}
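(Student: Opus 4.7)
The plan is to verify that the algorithm $\calA_{\e,d}$ defined in \eqref{def-alg1}--\eqref{def-alg2} achieves what the theorem claims, and then to derive the two tractability statements from the resulting cost bound by specializing the cost function $\$$.

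First I would collect the error bound $\e\cdot\sqrt{2}$ and the displayed cost bound for $\calA_{\e,d}$. Both are essentially established in the derivation preceding the theorem statement. The squared error splits into the tail $|\setu|>m_1(\e,d)$, bounded by $\e^2$ via the defining property \eqref{def-m1} of $m_1(\e,d)$, and the head $1\le|\setu|\le m_1(\e,d)$, also bounded by $\e^2$ by the choice of $R(\e,d)$ and $\e_\setu$ in \eqref{def-alg2}. The cost bound follows from monotonicity of $\$$, the inequality $n_\setu\le[L(\tau)]^{|\setu|}/\e_\setu^{2\tau}$, and evaluation of the sum $\sum_{\ell=1}^{m_1(\e,d)}\newton{d}{\ell}\cdot[L(\tau)]^\ell/d^{\ell\tau/(1+\tau)}$, yielding exactly the displayed formula. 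The accompanying bounds on $m_1(\e,d)$ and $[R(\e,d)]^{1+\tau}$ follow from the Proposition and the explicit estimate of $R$ derived above.

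Next I would derive quasi-polynomial tractability under $\$(k)=O(e^{qk})$ by taking the logarithm of the cost bound. Using $m_1(\e,d)\le C_1\ln(1/\e)/\ln\ln(1/\e)$, the factors $\$(m_1(\e,d))$ and $[L(\tau)]^{m_1(\e,d)}$ each contribute $O(\ln(1/\e))$ to the logarithm. The factor $[R(\e,d)]^{1+\tau}/\e^{2\tau}$ is handled via the two-case bound: in the regime $m_1(\e,d)\ge d^{1/(1+\tau)}$ it contributes $O(\ln(1/\e))$, and otherwise it contributes $O(\ln(1/\e)\ln d/\ln\ln(1/\e))$. Summing, $\ln{\rm cost}(\calA_{\e,d})=O(\ln(1/\e)(1+\ln d))$, which matches the definition of quasi-polynomial tractability with a suitable constant $t$.

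For weak tractability under $\$(k)=O(e^{e^{qk}})$, the decisive estimate is $\ln\$(m_1(\e,d))\le e^{q\cdot m_1(\e,d)}\le (1/\e)^{qC_1/\ln\ln(1/\e)}$, which is $o(1/\e)$ as $\e\to0$. The remaining factors still contribute only $O(\ln(1/\e)(1+\ln d))$ to the logarithm, and this is $o(d+1/\e)$. Hence $\ln{\rm cost}(\calA_{\e,d})/(d+1/\e)\to0$ as required. The main obstacle, although minor, is the case analysis for $[R(\e,d)]^{1+\tau}$ and checking that the mixed $\e$-$d$ growth in the otherwise branch is absorbed into the quasi-polynomial and weak tractability forms; once this bookkeeping is in place, the theorem reduces to assembling the three factors in the cost formula together with the previously established bounds on $m_1$ and $R$.
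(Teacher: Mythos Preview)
Your proposal is correct and follows essentially the same route as the paper: the error and cost bounds for $\calA_{\e,d}$ are exactly the computations carried out in the text immediately preceding the theorem, and the tractability statements are then read off from the displayed cost bound together with the estimates on $m_1(\e,d)$ and $[R(\e,d)]^{1+\tau}$. If anything, you are more explicit than the paper in spelling out how the quasi-polynomial and weak tractability conclusions follow from taking logarithms and handling the two cases for $R$, which the paper leaves to the reader.
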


We believe that the result on quasi-polynomial tractability is sharp 
in general, i.e., there exist $H$ and $G_1$ such that the 
corresponding multivariate problem with weights $\gamma_{d,\setu}=
d^{-|\setu|}$ is only quasi-polynomially tractable. 
However, as we prove in the next section, Theorem \ref{thm:main} is not sharp 
when \eqref{new-ass} holds. 

\subsubsection{Special Case \eqref{new-ass}}
We begin this section by assuming for a moment that 
\begin{equation}\label{new-ass1}
  \bigg\|\sum_{\setu\subseteq[1..d]}f_\setu\bigg\|_{\calG_d}^2=
  \sum_{\setu\subseteq[1..d]}\|f_\setu\|_{\calG_d}^2
   \quad\mbox{for every}\quad f\in\calH_d,
\end{equation}
which is a stronger assumption than \eqref{new-ass}. 
Similar spaces with norms satisfying \eqref{new-ass1} 
have been considered in \cite{WW10a,WW10b} 
for functions with infinitely many variables ($d=\infty$) 
and some of the results below follow from \cite{WW10a}. 

As shown in \cite{WW10a}, \eqref{new-ass1} allows for a simple
characterization of the spectrum of 
\[
  \calW_d=\calS_d^*\circ\calS_d:\calH_d\to\calH_d
\]
in terms of the spectrum of $\calW_1$. 
Indeed, the eigenvalues of $\calW_d$ are given by 
\[
  \gamma_\setu\cdot\prod_{j\in\setu}\lambda_{1,k_j}
\]
for all $\setu$ and $k_j\in\naturals$. For $\setu=\emptyset$, 
1 is the corresponding eigenvalue. 

Let $\lambda_{d,n}$ ($n\in\naturals_+$) be the eigenvalues of 
$\calW_d$ ordered so that 
\[
   \lambda_{d,n}\ge\lambda_{d,n+1}\quad\mbox{for all $n$}.
\]
Let $\eta_{d,n}$ be the corresponding eigenfunctions that form 
a complete orthonormal system in $\calH_d$. They also have a tensor 
product form and $\eta_{d,n}$ corresponding to the eigenvalue 
$\gamma_\setu\prod_{j\in\setu}\lambda_{1,k_j}$ has all the active 
variables listed in $\setu$. 

Define
\[
  \calA_{\e,d}^{*}(f):=\sum_{j=1}^{n(\e,d)} \il f,
  \eta_{d,j}\ir_{\calH_d} \cdot \eta_{d,j}
   \quad\mbox{with}\quad n(\e,d):=\min\left\{k\ :\ 
    \lambda_{d,k+1}\le \e^2\right\}.
\]
It follows from \cite{WW10a} that $\calA^*_{\e,d}$ is optimal for any 
cost function $\$$, i.e., 
${\rm error}(\calA^*_{\e,d};\calH_d,\calH_d)\le\e$ and 
\[
   {\rm cost}(\calA^*_{\e,d})=\min\left\{{\rm cost}(\calA)\ :\ 
     {\rm error}(\calA;\calH_d,\calG_d)\le\e\right\}=
    {\rm comp}(\e;\calH_d,\calG_d).
\]
Now $\lambda_{1,1}=C_0^2$, 
\[
   m_2(\e,d)=\min\left(d\,,\,
  \left\lceil\frac{\ln(1/\e^2)}{\ln(d/\lambda_{1,1})}
   \right\rceil-1\right),
\]
and the functional evaluations $\il f,\eta_{d,j}\ir_{\calH_d}$ 
used by the algorithm have at most $m_2(\e,d)$ active variables.

Note that, for every $\delta>0$, we can bound $m_2(\e,d)$ by 
\begin{equation}\label{brbr}
  m_2(\e,d)\le \max\left(\lambda_{1,1}\cdot e^{1/\delta}\,,\,
    \delta\cdot\ln(1/\e^2)\right).
\end{equation}
Indeed, \eqref{brbr} trivially holds if $d\le \lambda_{1,1}\cdot
e^{1/\delta}$, and 
\[
  \frac{\ln(1/\e)}{\ln(d/\lambda_{1,1})}< 
     \frac{\ln(1/\e)}{\ln(e^{1/\delta})}=\delta\cdot \ln(1/\e^2)
   \quad\mbox{if}\quad d>\lambda_{1,1}\cdot e^{1/\delta}. 
\]

Take now 
\[
    \tau>1/\alpha,
\]
where, as before, $\alpha={\rm decay}(\{\lambda_{1,n}\}_{n=1}^\infty)>0$. 
Using a standard technique, we get 
\begin{eqnarray*}
  \lambda_{d,k}^\tau\cdot k&\le& \sum_{j=1}^\infty \lambda_{d,j}^\tau
    \,=\,\sum_{\setu\subseteq[1..d]}\gamma_{d,\setu}^\tau
       \sum_{\bk\in\naturals^{|\setu|}_+}\prod_{\ell=1}^{|\setu|}
     \lambda_{1,k_\ell}^\tau\\
    &=& \sum_{\ell=0}^d\newton{d}{\ell}\cdot \left(\frac{L(\tau)}
       {d^\tau}\right)^\ell\,=\, 
        \left(1+\frac{L(\tau)}{d^\tau}\right)^d\\
    &\le& e^{L(\tau)\cdot d^{1-\tau}}\,<\,\infty.
\end{eqnarray*}
Hence 
\[
  \lambda_{d,k}\le e^{L(\tau)\cdot d^{1-\tau}/\tau}\cdot k^{-1/\tau}
   \quad\mbox{and}\quad 
   n(\e,d)\le \left\lceil e^{L(\tau)\cdot d^{1-\tau}}\cdot
     \e^{-2\cdot\tau}\right\rceil - 1.    
\]
Note that the term $e^{L(\tau)\cdot d^{1-\tau}}$ is 
bounded from above by $e^{L(\tau)}$ if $\tau\ge1$, and converges to $1$ 
with increasing $d$ if $\tau>1$. 

We return now to the original assumption \eqref{new-ass}. By replacing 
$\e$ by $\e/\sqrt{C}$ in all the formulas above, we get that 
$\calA^*_{\e/\sqrt{C},d}$
has the error bounded by $\e$ when the norm in $\calG_d$ satisfies
\[
  \bigg\|\sum_{\setu\subseteq[1..d]}f_\setu\bigg\|_{\calG_d}^2=
   C\cdot\bigg\|\sum_{\setu\subseteq[1..d]}f_\setu\bigg\|_{\calH_d}^2
   \quad \mbox{for all $f\in\calH_d$}.
\]
Moreover, all upper bounds on the cost and errors provide 
corresponding upper bounds for norms that satisfy only
\eqref{new-ass}, i.e., when the above equality is replaced by 
inequality.

This yields the following theorem. 

\begin{theorem}\label{thm:orth}
Suppose that \eqref{new-ass} and \eqref{alpha} hold. 
Then for any $\tau>1/\alpha$, 
\[
  {\rm comp}(\e;\calS_d,\calH_d)\le
    \$\left(m_2(\e/\sqrt{C},d)\right)
      \cdot\frac{e^{L(\tau)\cdot d^{1-\tau}}}{(\e/\sqrt{C})^{2\cdot\tau}} 
\]
with 
\[
   m_2(\e/\sqrt{C},d)\le\min\left(d\,,\,
     \frac{\ln(C/\e^2)}{\ln(d/\lambda_{1,1})}\right).  
\]
Due to \eqref{brbr}, the approximation problem is strongly
polynomially tractable with the exponent 
\[
  p^{\rm str}\le 2\cdot \max(1,1/\alpha).
\]
even if $\$(d)=O\left(e^{q\cdot d}\right)$, and is 
weakly tractable even if $\$(d)=O\left(e^{e^{q\cdot d}}\right)$ 
for some $q\ge0$. 
\end{theorem}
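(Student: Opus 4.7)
The plan is to assemble the estimates established in the paragraphs immediately preceding the theorem. The discussion has already produced: the spectrum of $\calW_d$ under the equality version \eqref{new-ass1}, the optimality of the truncated-SVD algorithm $\calA^*_{\e,d}$, the fact that every eigenfunction $\eta_{d,j}$ used by $\calA^*_{\e,d}$ has at most $m_2(\e,d)$ active variables, the bound \eqref{brbr} on $m_2$, and the standard $\tau$-trace estimate $n(\e,d)\le\lceil e^{L(\tau)\,d^{\,1-\tau}}\e^{-2\tau}\rceil-1$. So I would first combine these: the information cost of $\calA^*_{\e,d}$ is at most $\$(m_2(\e,d))\cdot n(\e,d)$, which yields the displayed bound on ${\rm comp}(\e;\calS_d,\calH_d)$ in the equality case. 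To reduce from the general assumption \eqref{new-ass} to \eqref{new-ass1}, I would run the algorithm $\calA^*_{\e/\sqrt{C},d}$: its error in $\calG_d$ is at most $\sqrt{C}\cdot(\e/\sqrt{C})=\e$, and its cost is obtained by substituting $\e/\sqrt{C}$ for $\e$ in the equality-case estimate.

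Next I would derive the strong polynomial tractability claim. Choose $\tau>1/\alpha$ with $\tau\ge 1$, so that $e^{L(\tau)\,d^{\,1-\tau}}\le e^{L(\tau)}$ is bounded in $d$. Apply \eqref{brbr} with some $\delta>0$ to conclude that
\[
  m_2(\e/\sqrt{C},d)\le\max\bigl(\lambda_{1,1}\,e^{1/\delta},\ 2\delta\ln(C/\e^2)\bigr).
\]
Since $\$(k)=O(e^{qk})$, the factor $\$(m_2)$ is bounded by $e^{q\lambda_{1,1}e^{1/\delta}}$ times $(C/\e^2)^{2q\delta}$; letting $\delta\to 0$ makes the power of $1/\e$ coming from this factor arbitrarily small. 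The remaining $1/\e^{2\tau}$ drives the exponent, and by letting $\tau\downarrow\max(1,1/\alpha)$ we obtain $p^{\rm str}\le 2\max(1,1/\alpha)$.

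For weak tractability under $\$(k)=O(e^{e^{qk}})$, take any $\tau>\max(1,1/\alpha)$ and observe that $\ln {\rm comp}(\e;\calS_d,\calH_d)$ is controlled by
\[
  \ln\$(m_2(\e/\sqrt{C},d))\ +\ L(\tau)\,d^{\,1-\tau}\ +\ 2\tau\ln(\sqrt{C}/\e)\ +\ O(1).
\]
The second term is bounded by a constant (since $\tau>1$), and the third is $O(\ln(1/\e))$. Applying \eqref{brbr} again with any fixed small $\delta<1/(2q)$, one has $m_2\le\max(\lambda_{1,1}e^{1/\delta},\ 2\delta\ln(C/\e^2))$, so $e^{qm_2}=O((1/\e)^{4q\delta})$ with $4q\delta<1$. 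Therefore $\ln\$(m_2)=O(e^{qm_2})=O((1/\e)^{4q\delta})=o(d+1/\e)$, and dividing by $d+1/\e$ and taking $d+1/\e\to\infty$ gives the limit $0$ required for weak tractability.

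The only delicate point I anticipate is the interplay between the three scaling parameters $\tau$, $\delta$, and the implicit constants in \eqref{brbr}; in particular, one must keep $\delta$ free (not $\delta=1$) so that the $\$$-induced contribution to the exponent of $1/\e$ can be made arbitrarily small in the strong tractability estimate, and strictly less than $1$ for weak tractability. Everything else is bookkeeping on top of the machinery already set up before the theorem.
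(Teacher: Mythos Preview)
Your proposal is correct and follows essentially the same approach as the paper: the theorem is stated there as a direct summary of the preceding discussion (``This yields the following theorem''), and you have correctly assembled those ingredients---the optimal truncated-eigenfunction algorithm under \eqref{new-ass1}, the bound on the number of active variables via $m_2$, the $\tau$-trace estimate on $n(\e,d)$, and the passage from \eqref{new-ass1} to \eqref{new-ass} by the $\e\mapsto\e/\sqrt{C}$ substitution. Your explicit derivations of the strong and weak tractability claims (tuning $\delta$ in \eqref{brbr} so that the $\$$-contribution to the exponent is negligible, then sending $\tau\downarrow\max(1,1/\alpha)$) are exactly what the paper intends but does not spell out; apart from a harmless extra factor of $2$ when you transcribed \eqref{brbr}, the argument is sound.
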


We now show that the upper bound on $p^{\rm str}$ is sharp if 
\eqref{new-ass1} holds. 

\begin{proposition}\label{prop:orth}
If \eqref{new-ass1} and \eqref{alpha} hold then 
\[
   p^{\rm str}= 2\cdot \max(1,1/\alpha).
\]
\end{proposition}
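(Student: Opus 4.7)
The upper bound $p^{\rm str}\le 2\max(1,1/\alpha)$ is already provided by Theorem~\ref{thm:orth}, so only the matching lower bound needs argument. My plan is to derive the two inequalities $p^{\rm str}\ge 2/\alpha$ and $p^{\rm str}\ge 2$ separately, by specializing to two extremal choices of $d$: $d=1$ for the first and $d\sim C_0^2/\e^2$ for the second. The pointwise maximum of these two bounds matches the upper bound from Theorem~\ref{thm:orth}.

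For $p^{\rm str}\ge 2/\alpha$ I would restrict attention to $d=1$. Under \eqref{new-ass1} the spectrum of $\calW_1$ is $\{1\}\cup\{\lambda_{1,k}\}_{k\ge 1}$, and since $\$(k)\ge 1$ the cost of any algorithm dominates the number of functional evaluations it uses, giving
\[
   {\rm comp}(\e;\calH_1,\calG_1)\ge n_\e:=\#\{k:\lambda_{1,k}>\e^2\}.
\]
Suppose $p$ is such that $\sup_{\e,d}\e^p\cdot{\rm comp}(\e;\calH_d,\calG_d)<\infty$. Then $n_\e\le C\e^{-p}$ for all small $\e$. Since $\{\lambda_{1,n}\}$ is decreasing, $n\le n_\e$ is equivalent to $\e<\sqrt{\lambda_{1,n}}$, so letting $\e\uparrow\sqrt{\lambda_{1,n}}$ in $n\le C\e^{-p}$ yields the pointwise bound $\lambda_{1,n}\le(C/n)^{2/p}$. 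Substituting into $\sum_n\lambda_{1,n}^{1/t}$ gives convergence whenever $pt<2$, so the decay satisfies $\alpha\ge 2/p$, forcing $p\ge 2/\alpha$.

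For $p^{\rm str}\ge 2$ I would exploit the multiplicities generated by the weights $\gamma_{d,\setu}=d^{-|\setu|}$. Under \eqref{new-ass1}, each singleton $\setu=\{j\}$ with $k_j=1$ contributes the eigenvalue $\gamma_{d,\{j\}}\cdot\lambda_{1,1}=C_0^2/d$ to $\calW_d$, so $C_0^2/d$ is an eigenvalue of $\calW_d$ of multiplicity at least $d$. Given small $\e$, pick $d:=\lfloor C_0^2/(2\e^2)\rfloor$ so that $C_0^2/d>\e^2$. Then
\[
  {\rm comp}(\e;\calH_d,\calG_d)\ge\#\{n:\lambda_{d,n}>\e^2\}\ge d\ge c\e^{-2}
\]
for an absolute constant $c>0$. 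This rules out any exponent $p<2$ in the supremum defining $p^{\rm str}$.

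The only delicate step is the inversion in the second paragraph, where the \emph{a priori} counting bound $n_\e\le C\e^{-p}$ is converted into the pointwise eigenvalue bound $\lambda_{1,n}\le C'/n^{2/p}$. This inversion is legitimate precisely because $\{\lambda_{1,n}\}$ is monotone decreasing; once it is secured, connecting summability of $\{\lambda_{1,n}^{1/t}\}$ to the decay parameter $\alpha$ is a routine convergence test. Both lower bounds then combine with Theorem~\ref{thm:orth} to give the claimed equality $p^{\rm str}=2\max(1,1/\alpha)$.
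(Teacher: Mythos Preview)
Your proof is correct. The lower bound $p^{\rm str}\ge 2/\alpha$ via restriction to $d=1$ is exactly what the paper does (in one sentence, citing it as known). Your argument for $p^{\rm str}\ge 2$, however, differs from the paper's. The paper argues by contradiction: if some $p<2$ were admissible, then $\lambda_{d,k}\le c\,k^{-2/\widehat p}$ uniformly in $d$ for any $\widehat p\in(p,2)$, which forces $\sum_k\lambda_{d,k}^{q/2}$ to be bounded uniformly in $d$ for $q\in(\widehat p,2)$; but this sum equals $(1+L(q/2)/d^{\,q/2})^d$, which diverges as $d\to\infty$ since $q/2<1$. Your route is more direct and elementary: you single out the $d$ eigenvalues $C_0^2/d$ coming from singleton subsets and tune $d\sim C_0^2/\e^2$ so that all of them lie above the threshold $\e^2$. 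Your argument avoids the eigenvalue-sum identity and the contradiction, at the cost of being slightly more ad hoc; the paper's argument, by contrast, shows why \emph{any} hypothetical improvement to the exponent would violate the global spectral structure, and would generalize more readily to other weight sequences. Either way the conclusion is the same.
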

\begin{proof}
Even for $d=1$, we have ${\rm comp}(\e;H,\calG_1)=
\Omega\left(\e^{-2/\alpha}\right)$. Hence we only need to consider 
the case $\alpha>1$. 
Suppose by the contrary that $p^{\rm str}=p$ for 
$p<2$. The complexity of the problem with 
any cost function $\$$ satisfying our assumptions is bounded from below
by the complexity when $\$(d)=1$ for all $d$, and the latter
complexity is fully determined by the eigenvalues of $\calW_d$. That
is, we have 
\[
    {\rm comp}(\e;\calH_d,\calG_d)\ge\$(0)\cdot 
    \min\left\{k\ :\ \lambda_{d,k+1}\le\e^2\right\}
\]
for any cost function $\$$. 
Take any $\widehat{p}\in(p,2)$. Then there is $c(\widehat{p})$ 
such that 
\[
  \lambda_{d,k}\le c(\widehat{p})\cdot k^{-2/\widehat{p}}
   \quad\mbox{for all $\e<1$ and $d\ge1$}. 
\]
However, then, for any $q>\widehat{p}$, 
\begin{equation}\label{cos}
   \sum_{k=1}^\infty\lambda_{d,k}^{q/2}\le (c(\widehat{p}))^{q/2}\cdot
   \sum_{k=1}^\infty k^{-q/\widehat{p}}<\infty \quad\mbox{for all $d\ge1$}.
\end{equation}
Take $q\in(\widehat{p},2)$. As already explained,
\[
  \sum_{k=1}^\infty\lambda_{d,k}^{q/2}=
    \left(1+\frac{L(q/2)}{d^{q/2}}\right)^d,
\]
which converges to infinity as $d\to\infty$. This contradicts 
\eqref{cos} and completes the proof. 
\end{proof}

We apply Theorem \ref{thm:orth} to the following $L_2$ approximation 
problem. 

\medskip\noindent{\bf Example.\ } Consider $K(x,y)=\min(x,y)$, 
$D=[0,1]$, and $\rho\equiv1$. It is well known that for the 
corresponding $L_2$ approximation problem, we have 
$\alpha=2$. Hence, for $\$(d)$ at most exponential 
in $d$, we have strong tractability with the exponent
\[
   p^{\rm str}\le 2. 
\]

\section*{Acknowledgments}
The research presented in the paper was initiated during our stay
at the {\em Hausdorff Research Institute for Mathematics},
University of Bonn, Summer 2011. We would like to thank the Institute and
the organizers of
{\em Analysis and Numerics of High Dimensional Problems} Trimester
Program for their hospitality.

M.~Hegland, 
Mathematical Sciences Institute, 
The Australian National University, 
Canberra ACT 0200 Australia, email: markus.hegland@anu.edu.au

\medskip\noindent
G.~W.~Wasilkowski, Department of Computer Science, University of
Kentucky, Lexington, KY 40506, USA, email: greg@cs.uky.edu

\end{document}